\newtheorem{theorem}{Theorem}[section]
\newtheorem{example}{Example}[section]
\newtheorem{note}{Note}[section]
\newlist{notes}{enumerate}{1}
\setlist[notes]{label=Note: , leftmargin=*}
\title{On the Spectral Analysis of Power Graph of Dihedral Groups}
\author{
    Basit Auyoob Mir, Fouzul Atik,  Priti Prasanna Mondal, \\
    \small Department of Mathematics, SRM University-AP, Andhra Pradesh 522240, India. \\
    \small \texttt{mirbasit553@gmail.com, fouzulatik@gmail.com, pritiprasanna1992@gmail.com}
}
\date{}
\begin{document}
\maketitle

\begin{abstract}
\noindent
The power graph \( \mathcal{G}_G \) of a group \( G \) is a graph whose vertex set is \( G \), and two elements \( x, y \in G \) are adjacent if one is an integral power of the other. In this paper, we determine the adjacency, Laplacian, and signless Laplacian spectra of the power graph of the dihedral group \( D_{2pq} \), where \( p \) and \( q \) are distinct primes. Our findings demonstrate that the results of Romdhini et al. [2024], published in the \textit{European Journal of Pure and Applied Mathematics}, do not hold universally for all \( n \geq 3 \). Our analysis demonstrates that their results hold true exclusively when \( n = p^m \) where \( p \) is a prime number and \( m \) is a positive integer. The research examines their methodology via explicit counterexamples to expose its boundaries and establish corrected results. This study improves past research by expanding the spectrum evaluation of power graphs linked to dihedral groups.
\\

\textbf{Keywords:} Power Graph, Adjacency Matrix, Laplacian Matrix, Signless Laplacian Matrix, Eigenvalues.\\
{\bf 2020 Mathematics Subject Classification:} 05C50, 05C25.
\end{abstract}

\section{Introduction}
Modern research on studying algebra through graph-theoretic examination has produced diverse intriguing structures that connect algebraic components. The power graph of a group (denoted as $\mathcal{G}_G$) represents one of the notable constructions that researchers now focus on intensively. Given a group $G$, the power graph $\mathcal{G}_G$ is a graph whose vertex set is $G$, with two distinct vertices $x$ and $y$ being adjacent if and only if one is a power of the other, i.e., $x = y^k$ or $y = x^k$ for some integer $k$.  
The dihedral group \cite{rose2009} $D_{2n}$ presents an interesting power graph structure since this is a non-abelian group of order $2n$ consisting of generators $a$ and $b$ where $a$ has order $n$ and $b$ has order $2$ satisfying the relation $ba = a^{-1}b = a^{n-1}b$. The spectral properties of $\mathcal{G}_{D_{2n}}$ enable researchers to understand the graph structure and the underlying group by studying its adjacency spectrum, its Laplacian spectrum, and its signless Laplacian spectrum.

The adjacency matrix \cite{brouwer2011} of power graph of $D_{2n}$, denoted by $A(\mathcal{G}_{D_{2n}})$, is defined as the $2n \times 2n$ matrix whose $(i, j)$-th entry is 1 if $i\sim j$  and  is $0$ otherwise. The diagonal degree matrix \cite{brouwer2011}, $D(\mathcal{G}_{D_{2n}})$, is another kind of matrix and is defined as the $2n \times 2n$ matrix where each diagonal entry represents the degree of the corresponding vertex. Another matrix associated with the graphs is the Laplacian matrices denoted as $L(\mathcal{G}_{D_{2n}})$, which totally depends on the adjacency and diagonal degree matrices, and is defined as follows \cite{brouwer2011} 
$L(\mathcal{G}_{D_{2n}}) = D(\mathcal{G}_{D_{2n}}) - A(\mathcal{G}_{D_{2n}})
$ and the signless Laplacian matrix \cite{brouwer2011} 
$\mathcal{SL}(\mathcal{G}_{D_{2n}}) = D(\mathcal{G}_{D_{2n}}) + A(\mathcal{G}_{D_{2n}})$.
On performing the eigenvalue analysis of such matrices, they provide information on power graph connection status, classification attributes, and structural aspects, which are used to identify spectral features of the power graph.  The insights obtained from these matrix spectra push researchers to reexamine group algebraic features that arise in the underlying network structure.

The investigation of power graphs has a rich history. Kelarev and Quinn  \cite{kelarev2000, kelarev2002} first introduced the concept of a power graph in the context of semigroups. Researchers studied power graphs more extensively throughout different group structures starting from cyclic groups \cite{chattopadhyay2019,kumar2021}, moving to dihedral groups \cite{romdhini2024}, and extending the analysis to broad group categories. These studies have focused on diverse aspects, such as determining graph-theoretic properties like connectedness, diameter, and clique number and analyzing spectral characteristics. For instance, several studies have characterized the power graphs of specific groups and established relationships between group properties and graph parameters \cite{moghaddamfar2014, cameron2020}. Sriparna et al. \cite{chattopadhyay2018} initiated the study of the power graphs of groups in which they investigated the spectral properties of power graphs for cyclic, dihedral, and dicyclic groups.  They derive bounds for the spectral radii and partially determine the spectra of these group power graphs. Mehranian et al. in \cite{mehranian2017} analyzed the spectra of power graphs for various groups, including cyclic, dihedral, elementary abelian groups of prime power order, and the Mathieu group. Their findings offer key spectral insights into the relationship between algebraic and graph-theoretic structures. In \cite{panda2019}, various aspects of the Laplacian spectra of power graphs of finite cyclic, dicyclic, and finite \( p \)-groups have been studied, and the algebraic connectivity has been completely determined for finite \( p \)-groups. Additionally, in \cite{panda2019}, the multiplicity of the Laplacian spectral radius has been analyzed, providing complete results for dicyclic and finite \( p \)-groups. Also, in \cite{chattopadhyay2015}, the Laplacian spectrum of the power graph of the additive cyclic group \( \mathbb{Z}_n \) and the dihedral group \( D_n \) has been studied. It is shown that the Laplacian spectrum of \( \mathcal{G}(D_n) \) is the union of that of \( \mathcal{G}(\mathbb{Z}_n) \) and \(\{2n,1\}\). Additionally, the algebraic connectivity of \( \mathcal{G}(D_n) \) is determined, and bounds for the same are provided for \( \mathcal{G}(\mathbb{Z}_n) \). The spectral analysis of power graphs, in particular, has become an active area of research, with investigations into the spectra of power graphs of finite groups leading to interesting connections between algebraic and combinatorial properties. For more on the developments of power graphs of some finite groups, we refer the reader to \cite{kumar2021}. 

Recently, Romdhini et al. \cite{romdhini2024} published a paper in the \textit{European Journal of Pure and Applied Mathematics} focusing on the spectral properties of the power graph of dihedral groups, denoted by $D_{2n}$. They presented formulations for the characteristic polynomials of these graphs and claimed their results held generically. However, our analysis reveals that their claimed genericity is not universally valid. This paper critically examines the findings presented in \cite{romdhini2024}. We demonstrate, through counterexamples, that their main result concerning the characteristic polynomials of the power graph of dihedral groups does not hold for all $n \geq 3$. Specifically, we show that their results are only valid when $n$ is of the form $p^m$, where $p$ is a prime number and $m$ is a positive integer. Furthermore, we provide a correction to their result and explicitly calculate the adjacency, Laplacian, and signless Laplacian spectrum of the power graph of $D_{2pq}$ where $p$ and $q$ are distinct primes.

Throughout this paper, we extend the study of power graph spectra by addressing gaps in existing results. Section 2 presents essential definitions and preliminaries. In Section 3, we provide counterexamples to results proved in \cite{romdhini2024}. Finally, in Section 4, we determine the full adjacency, Laplacian, and signless Laplacian spectrum of $\mathcal{G}_{D_{2pq}}$, further enriching the spectral analysis of power graphs.

\section{Preliminaries}
\noindent This section presents some of the basic definitions together with key statements and main theorems which establish our research foundation before introducing our main results. These initial findings are derived from the existing research studies. This work provides major insights into spectral and structural properties that were not there in the research of power graphs before and related algebraic structures. Subsequent theorems and assumptions operate as essential foundations for our exploration. The subsequent sections use these results to build their foundations.

% \begin{proposition}
%     \emph{\cite{chakrabarty2009undirected}}  Let \( S \) be a semigroup. Then the power graph \( \mathcal{G}_S \) is complete if and only if all cyclic subsemigroups of \( S \) are totally ordered under set inclusion. That is, for any two cyclic subsemigroups \( S_1 \) and \( S_2 \) of \( S \), one must be contained within the other (i.e., either \( S_1 \subseteq S_2 \) or \( S_2 \subseteq S_1 \)).  

% \end{proposition}

\begin{theorem}
    \emph{\cite{chakrabarty2009}} Let \( G \) be a finite group. The \( \mathcal{G}_G \) is complete if and only if \( G \) is either a cyclic group of order \( 1 \) or a cyclic group of order \( p^m \) for some prime \( p \) and for some \( m \in \mathbb{N} \).  
\end{theorem}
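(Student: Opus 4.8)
The plan is to prove the two implications separately, using the lattice of cyclic subgroups generated by individual elements as the central tool. Throughout I would record the basic reformulation that, for $x,y \in G$, the edge $x \sim y$ in $\mathcal{G}_G$ holds precisely when $\langle x \rangle \subseteq \langle y \rangle$ or $\langle y \rangle \subseteq \langle x \rangle$, since $x = y^k$ for some integer $k$ is equivalent to $x \in \langle y \rangle$. Thus completeness of $\mathcal{G}_G$ is the statement that the cyclic subgroups $\{\langle x \rangle : x \in G\}$ are pairwise comparable under inclusion.

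For the reverse implication, suppose $G$ is trivial or cyclic of order $p^m$. The order-$1$ case is immediate. If $G = \langle g \rangle$ has order $p^m$, I would invoke the classical fact that a cyclic group has a unique subgroup of each order dividing its order; since the divisors of $p^m$ are $1, p, \dots, p^m$ and are totally ordered by divisibility, the subgroups of $G$ form a single chain $\{e\} \subset H_1 \subset \cdots \subset H_m = G$. Given any two distinct elements $x,y$, the subgroups $\langle x \rangle$ and $\langle y \rangle$ both lie on this chain and are therefore comparable, so $x \sim y$ and $\mathcal{G}_G$ is complete.

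For the forward implication, assume $\mathcal{G}_G$ is complete. First I would show $G$ is cyclic by choosing an element $g$ of maximal order $n$ (possible since $G$ is finite). For an arbitrary $h \in G$, completeness forces $h \sim g$, so either $h \in \langle g \rangle$, or $g \in \langle h \rangle$; in the latter case $\langle g \rangle \subseteq \langle h \rangle$ yields $n = \mathrm{ord}(g) \le \mathrm{ord}(h) \le n$, whence $\langle h \rangle = \langle g \rangle$ and again $h \in \langle g \rangle$. Hence $G = \langle g \rangle$ is cyclic. Writing $n = |G|$, I would then suppose for contradiction that $n$ has two distinct prime divisors $p$ and $q$; a cyclic group of order $n$ contains an element $x$ of order $p$ and an element $y$ of order $q$, but $x \sim y$ would force $p \mid q$ or $q \mid p$, which is absurd. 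Therefore $n = 1$ or $n = p^m$, completing the proof.

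The step I would guard most carefully is the reduction to cyclic groups in the forward direction. The naive route — observing that completeness makes any two elements of equal order generate the same subgroup, so each prime order is represented by a unique subgroup — leads into the classification of $p$-groups possessing a unique subgroup of order $p$, where one must separately exclude generalized quaternion groups. The maximal-order argument above is what lets me bypass that subtlety entirely, so I expect the only genuine obstacle to be presenting that trick cleanly rather than any deep difficulty.
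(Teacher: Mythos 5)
Your proposal is correct. Note that the paper does not prove this statement at all: it is quoted as a preliminary result from \cite{chakrabarty2009}, so there is no in-paper proof to compare against. Your argument --- reformulating adjacency as comparability of the cyclic subgroups $\langle x\rangle$ and $\langle y\rangle$, using the chain of subgroups of a cyclic $p$-group for the reverse direction, and for the forward direction taking an element of maximal order to force cyclicity and then a pair of elements of distinct prime orders to rule out two prime divisors --- is the standard proof and is essentially the one given in the cited reference. Your closing remark is also well taken: the maximal-order trick is exactly what avoids any detour through the classification of groups with a unique subgroup of order $p$ (and hence the generalized quaternion case), and the argument as you outline it is complete and gap-free.
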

\begin{theorem}
 \emph{\cite{mehranian2017}} Let $G=D_{2n}$ be a dihedral group of order $2n$. Let \( n \) be a prime power. Then the characteristic polynomials of both the power graph of the dihedral group \( D_{2n} \) can be determined as follows:
\[
P(\mathcal{G}(G))=\lambda^{n-1}(\lambda+1)^{n-2}(\lambda^3-(n-2)\lambda^2-(2n-1)\lambda+n^2-2n)
\]
\end{theorem}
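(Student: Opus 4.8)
The plan is to use the hypothesis $n = p^m$ to pin down the graph $\mathcal{G}(D_{2n})$ completely, and then read off its spectrum from an equitable partition. First I would split the vertex set into the $n$ rotations $\langle a\rangle = \{e, a, \ldots, a^{n-1}\}$ and the $n$ reflections $\{b, ab, \ldots, a^{n-1}b\}$. Each reflection $a^i b$ has order $2$, so $\langle a^i b\rangle = \{e, a^i b\}$; hence a reflection is a power of no element except itself and $e$ is its only proper power, which shows that no two reflections are adjacent, no reflection is adjacent to a nontrivial rotation, and every reflection is a pendant vertex whose unique neighbour is $e$. For the rotations, $\langle a\rangle$ is cyclic of order $p^m$, so by Theorem~2.1 its induced power graph is the complete graph $K_n$. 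Since $e$ is a power of every group element, $e$ is a universal vertex. This determines $\mathcal{G}(D_{2n})$ exactly: a clique $K_n$ on the rotations, together with $n$ reflections each joined only to $e$.

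Next I would fix the equitable partition $V = \{e\} \cup R \cup S$, with $R = \{a, \ldots, a^{n-1}\}$ the nontrivial rotations and $S$ the reflections, and write $A(\mathcal{G}(D_{2n}))$ in the corresponding block form: the $R$-block is $J_{n-1} - I_{n-1}$, the $S$-block and the $R$--$S$ block are zero, and all-ones vectors link $e$ to both $R$ and $S$. The bulk of the spectrum then comes from eigenvectors that have zero coordinate sum on a single cell. A vector supported on $S$ with zero sum lies in the kernel of $A$, contributing the eigenvalue $0$ with multiplicity $n-1$; a vector supported on $R$ with zero sum is an eigenvector of $J_{n-1}-I_{n-1}$ for $-1$ and is annihilated in the other blocks, contributing the eigenvalue $-1$ with multiplicity $n-2$. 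Together these account for the factors $\lambda^{n-1}$ and $(\lambda+1)^{n-2}$.

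The three remaining eigenvalues are exactly the eigenvalues of the $3\times 3$ quotient matrix
\[
B = \begin{pmatrix} 0 & n-1 & n \\ 1 & n-2 & 0 \\ 1 & 0 & 0 \end{pmatrix},
\]
whose $(i,j)$ entry records how many neighbours a vertex of cell $i$ has in cell $j$. A direct cofactor expansion of $\det(\lambda I - B)$ gives $\lambda^3 - (n-2)\lambda^2 - (2n-1)\lambda + (n^2 - 2n)$. By the equitable-partition theorem the characteristic polynomial of $A$ factors as the product of $P_B(\lambda)$ with the characteristic polynomial on the complementary within-cell zero-sum subspace, so that $P(\mathcal{G}(G)) = \lambda^{n-1}(\lambda+1)^{n-2}\bigl(\lambda^3-(n-2)\lambda^2-(2n-1)\lambda+n^2-2n\bigr)$, the degrees summing to $(n-1)+(n-2)+3 = 2n$ as required.

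I expect the only genuine subtlety to be the bookkeeping guaranteeing that these three families of eigenvectors are independent and exhaustive, namely that the quotient $B$ supplies precisely the component of the spectrum transverse to the within-cell zero-sum spaces. Since we are multiplying characteristic polynomials rather than asserting distinct eigenvalues, any numerical coincidence between a root of the cubic and $0$ or $-1$ is harmless, and the dimension count closing at $2n$ certifies that nothing is omitted or double-counted. The decisive, and genuinely hypothesis-dependent, step is the appeal to Theorem~2.1 that forces the rotations into a complete graph; this is exactly what fails once $n$ is not a prime power, which is the source of the counterexamples developed later.
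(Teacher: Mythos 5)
Your proof is correct. Note that the paper itself offers no proof of this statement---it is quoted as a preliminary from the cited reference---so the natural comparison is with the technique the paper uses for its analogous main results on $D_{2pq}$: there the authors write the adjacency matrix in block form over the partition by element orders and then grind through explicit row and column operations on $\det(A-\lambda I)$, peeling off the factors $\lambda^{pq-1}$ and $(\lambda+1)^{pq-4}$ by hand before reducing to a small determinant. You instead invoke the equitable partition $\{e\}\cup R\cup S$, read off the multiplicities $n-1$ and $n-2$ from within-cell zero-sum eigenvectors, and obtain the cubic as the characteristic polynomial of the $3\times 3$ quotient matrix $B$; your cofactor computation of $\det(\lambda I-B)=\lambda^{3}-(n-2)\lambda^{2}-(2n-1)\lambda+n^{2}-2n$ checks out, and the dimension count $(n-1)+(n-2)+3=2n$ together with the $A$-invariance of the within-cell zero-sum subspace makes the factorization exhaustive. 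The quotient-matrix route is cleaner and more transparent about \emph{why} the hypothesis matters: the single place where $n=p^{m}$ enters is in forcing $\langle a\rangle$ to induce a clique, exactly the step that breaks for composite non-prime-power $n$ and produces the paper's counterexamples. The determinant-manipulation approach, while more laborious, generalizes more mechanically to the finer partitions needed when the rotation subgroup is no longer a clique, which is presumably why the authors adopt it for $D_{2pq}$.
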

% \begin{theorem} \emph{\cite{chattopadhyay2015laplacian}} For \( n = pq \), where \( p \) and \( q \) are distinct prime numbers, the Laplacian eigenvalues of $\mathcal{G}(\mathbb{Z}_n)$  are given by 
% $0, \phi(n) + 1, n - p + 1, n - q + 1, n$
% with multiplicities $1, 1, q - 2, p - 2, \text{ and } \phi(n) + 1$ respectively.
% \end{theorem}

% \begin{theorem} \emph{\cite{chattopadhyay2015laplacian}} For any integer \( n \geq 3 \), the Laplacian characteristic polynomial of power graph of $D_n$ is given by  
% \[
% \mathcal{L}(\mathcal{G}_{D_n}) = \frac{(x-1)^n (x-2n)}{x-n} \cdot \mathcal{L}(\mathcal{G}_{\mathbb{Z}_n}).
% \]
% \end{theorem}

\begin{theorem} \label{theo2.5}\emph{\cite{chattopadhyay2015}} For any non-prime positive integer \( n > 3 \), the Laplacian eigenvalues of \( \mathcal{G}_{D_n} \) can be expressed in terms of those of \(\mathcal{G}_{\mathbb{Z}_n} \) as follows:
\[
\lambda_i(\mathcal{G}_{D_n}) =
\begin{cases}
    2n, & \text{if } i = 1 \\
    \lambda_i(\mathcal{G}_{\mathbb{Z}_n}) = n, & \text{for } 2 \leq i \leq \phi(n) + 1 \\
    \lambda_i(\mathcal{G}_{\mathbb{Z}_n}), & \text{for } \phi(n) + 2 \leq i \leq n - 1 \\
    1, & \text{for } n \leq i \leq 2n - 1 \\
    0, & \text{if } i = 2n
\end{cases}
\]
\end{theorem}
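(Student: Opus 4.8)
The plan is to exploit the rigid ``cone/join'' structure that $\mathcal{G}_{D_n}$ inherits from the way reflections and rotations sit inside $D_n$. Writing $D_n=\langle a,b\rangle$ with $|a|=n$, $|b|=2$, the $n$ rotations $\{e,a,\dots,a^{n-1}\}$ form a copy of $\mathbb{Z}_n$, and the subgraph they induce is exactly $\mathcal{G}_{\mathbb{Z}_n}$. Each reflection $a^ib$ has order $2$, so its only nontrivial power is $e$; hence every reflection is adjacent to $e$ and to nothing else, and the $n$ reflections form an independent set. Since $e$ is a power of every element, $e$ is adjacent to all $2n-1$ other vertices. The first step is therefore to record the structural identities
\[
\mathcal{G}_{D_n}=K_1\vee\big((\mathcal{G}_{\mathbb{Z}_n}-e)\,\sqcup\,\overline{K_n}\big),\qquad \mathcal{G}_{\mathbb{Z}_n}=K_1\vee(\mathcal{G}_{\mathbb{Z}_n}-e),
\]
where $K_1=\{e\}$ is the apex, $\overline{K_n}$ is the edgeless graph on the reflections, and $\vee$ is the graph join. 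The key point is that deleting the dominating vertex $e$ leaves the reflections \emph{isolated}, so the two graphs differ only by $n$ isolated vertices placed under a common apex.

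Next I would apply the standard description of the Laplacian spectrum of a join. The elementary lemma I will use: if $v$ is a Laplacian eigenvector of a graph $F$ with $v\perp\mathbf{1}$ and eigenvalue $\mu$, then $(0;v)$ (extend by $0$ on the apex) is a Laplacian eigenvector of $K_1\vee F$ with eigenvalue $\mu+1$; together with $\mathbf{1}$ (eigenvalue $0$) and the apex mode (eigenvalue $1+|F|$) these exhaust the spectrum. Writing $H'=\mathcal{G}_{\mathbb{Z}_n}-e$ and letting $\nu_1\ge\cdots\ge\nu_{n-2}>0$ be its nonzero Laplacian eigenvalues, the lemma applied to $\mathcal{G}_{\mathbb{Z}_n}=K_1\vee H'$ (with $H'$ connected) gives
\[
\mathrm{Lspec}(\mathcal{G}_{\mathbb{Z}_n})=\{0\}\cup\{\,1+\nu_j:1\le j\le n-2\,\}\cup\{n\}.
\]
For $\mathcal{G}_{D_n}=K_1\vee G_2$ with $G_2:=H'\sqcup\overline{K_n}$, the inner graph $G_2$ has the same nonzero eigenvalues $\nu_1,\dots,\nu_{n-2}$, but its null space now has dimension $n+1$ ($1$ from the connected $H'$ plus $n$ from the isolated reflections). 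The $n$-dimensional part of this null space orthogonal to $\mathbf{1}$ lifts, by the lemma, to the eigenvalue $0+1=1$, while the apex mode now has eigenvalue $1+(2n-1)=2n$. Hence
\[
\mathrm{Lspec}(\mathcal{G}_{D_n})=\{0\}\cup\{\,1+\nu_j:1\le j\le n-2\,\}\cup\{\,1^{(n)}\,\}\cup\{2n\}.
\]

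Comparing the two displays yields the clean multiset identity $\mathrm{Lspec}(\mathcal{G}_{D_n})=\big(\mathrm{Lspec}(\mathcal{G}_{\mathbb{Z}_n})\setminus\{n\}\big)\cup\{2n\}\cup\{1^{(n)}\}$, which is the full content of the theorem. To reach the indexed, decreasing-order form stated, the last step is to locate each block. The single new value $2n$ becomes $\lambda_1$. The eigenvalue $n$ of $\mathcal{G}_{\mathbb{Z}_n}$ has multiplicity $\phi(n)+1$ (one copy from the apex mode, the other $\phi(n)$ forced by the $\phi(n)$ generators, which are dominating in $H'$ and hence give $H'$ the eigenvalue $n-1$ with multiplicity $\phi(n)$); removing one copy leaves exactly $\phi(n)$ copies, which fill positions $2\le i\le\phi(n)+1$. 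The remaining values $1+\nu_j$ satisfy $1<1+\nu_j<n$, so they occupy positions $\phi(n)+2\le i\le n-1$ and coincide with $\lambda_i(\mathcal{G}_{\mathbb{Z}_n})$ there; the $n$ new ones fill positions $n\le i\le 2n-1$; and $0=\lambda_{2n}$.

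I anticipate two delicate points. The main obstacle is the multiplicity bookkeeping at eigenvalue $1$: one must verify that the $n$ isolated reflections contribute exactly $n$ eigenvalues equal to $1$, via the shift $0\mapsto 0+1$ of the null modes of $G_2$ that are orthogonal to $\mathbf{1}$, and \emph{not} to eigenvalue $0$; this is precisely the place where $\mathcal{G}_{D_n}$ and $\mathcal{G}_{\mathbb{Z}_n}$ diverge, and it is easy to misattribute these modes. The second, more routine, subtlety is the ordering: one must confirm $1<1+\nu_j<n$ for every middle eigenvalue, so that the blocks $\{2n\}$, the $n$-block, the middle block, the $1$-block, and $\{0\}$ do not interleave. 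Here the hypothesis that $n$ is \emph{non-prime} with $n>3$ is essential, since for prime $n$ the graph $H'$ is complete, there are no middle eigenvalues, $\phi(n)=n-1$, and the stated range $2\le i\le\phi(n)+1=n$ would wrongly force $\lambda_n=n$ in place of $\lambda_n=0$ --- exactly the degeneracy the hypothesis rules out.
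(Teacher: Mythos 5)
The paper does not actually prove this statement: it is quoted verbatim from \cite{chattopadhyay2015} as a preliminary (Theorem \ref{theo2.5}), and the authors only remark later that their own Laplacian result for $D_{2pq}$ could be deduced from it. So the relevant comparison is with the technique the paper uses for its analogous results, namely explicit row/column operations on $\mathcal{L}-\lambda I$ followed by a block upper-triangular determinant factorization. Your route is genuinely different and more structural: you isolate the join decompositions $\mathcal{G}_{D_n}=K_1\vee\bigl((\mathcal{G}_{\mathbb{Z}_n}-e)\sqcup\overline{K_n}\bigr)$ and $\mathcal{G}_{\mathbb{Z}_n}=K_1\vee(\mathcal{G}_{\mathbb{Z}_n}-e)$, and then read off both spectra from the standard join lemma, which immediately yields the multiset identity $\mathrm{Lspec}(\mathcal{G}_{D_n})=\bigl(\mathrm{Lspec}(\mathcal{G}_{\mathbb{Z}_n})\setminus\{n\}\bigr)\cup\{2n\}\cup\{1^{(n)}\}$. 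This buys conceptual transparency and works uniformly in $n$, whereas the determinant approach produces explicit characteristic polynomials but must be redone for each arithmetic type of $n$. Your structural claims (reflections adjacent only to $e$, independence of the reflections, the dominating identity) and the join lemma are both correct, and the middle block $2\le i\le n-1$ matches automatically because in both sorted spectra those positions are occupied by the same multiset $\{1+\nu_j\}$.

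Two small inaccuracies in your final bookkeeping deserve correction, though neither damages the conclusion. First, the multiplicity of $n$ in $\mathrm{Lspec}(\mathcal{G}_{\mathbb{Z}_n})$ is not always exactly $\phi(n)+1$: it equals the number of connected components of the complement of $\mathcal{G}_{\mathbb{Z}_n}$, and for non-prime prime powers such as $n=8$ or $n=9$ (which satisfy the hypothesis ``non-prime, $n>3$'') the power graph is complete and the multiplicity is $n-1>\phi(n)+1$. Correspondingly, your assertion that every middle eigenvalue satisfies $1+\nu_j<n$ strictly is false in those cases. What you actually need, and what does hold, is only the one-sided count: $\overline{\mathcal{G}_{\mathbb{Z}_n}-e}$ has at least $\phi(n)+1$ components (the $\phi(n)$ isolated generator-vertices plus at least one more vertex, since $\phi(n)\le n-2$ for composite $n$), so $n-1$ occurs in $\mathrm{Lspec}(\mathcal{G}_{\mathbb{Z}_n}-e)$ with multiplicity at least $\phi(n)$, giving $\lambda_i=n$ for $2\le i\le\phi(n)+1$ as stated; the theorem never claims $\lambda_{\phi(n)+2}\ne n$. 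With that adjustment your argument is complete.
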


\begin{theorem}\label{upperblockmat}\emph{\cite{horn2012}} Let \( B \) be a block upper triangular matrix of the form

\[
B =
\begin{bmatrix}
B_{11} & B_{12} & \cdots & B_{1k} \\
0 & B_{22} & \cdots & B_{2k} \\
\vdots & \vdots & \ddots & \vdots \\
0 & 0 & \cdots & B_{kk}
\end{bmatrix}
\]

where each \( B_{ii} \) is a square matrix. Then, the determinant of \( B \) is given by

\[
\det(B) = \det(B_{11}) \det(B_{22}) \cdots \det(B_{kk}).
\]
\end{theorem}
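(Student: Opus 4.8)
The plan is to prove the identity by induction on the number of diagonal blocks $k$, after first reducing the whole problem to the two-block case. The base case $k=1$ is immediate. For the inductive step I would regroup $B$ as a $2 \times 2$ block matrix, treating $B_{11}$ as the top-left block and the entire $(k-1)$-fold block upper triangular matrix $B' = (B_{ij})_{2 \le i,j \le k}$ as the bottom-right block; the strictly lower block column beneath $B_{11}$ is zero, so $B = \begin{bmatrix} B_{11} & \ast \\ 0 & B' \end{bmatrix}$, where $B'$ is again block upper triangular with diagonal blocks $B_{22}, \dots, B_{kk}$. Once the two-block identity $\det(B) = \det(B_{11})\det(B')$ is established, the inductive hypothesis applied to $B'$ gives $\det(B') = \det(B_{22}) \cdots \det(B_{kk})$, and the claim follows. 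Thus everything hinges on the single clean statement for two diagonal blocks.

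For the two-block case $B = \begin{bmatrix} X & Y \\ 0 & Z \end{bmatrix}$ with $X$ of size $n$ and $Z$ of size $m$, I would argue directly from the Leibniz expansion $\det(B) = \sum_{\sigma \in S_{n+m}} \operatorname{sgn}(\sigma) \prod_{i=1}^{n+m} B_{i\sigma(i)}$. Split the index set into the top block $T = \{1, \dots, n\}$ and the bottom block $W = \{n+1, \dots, n+m\}$. The zero block forces $B_{i\sigma(i)} = 0$ whenever $i \in W$ and $\sigma(i) \in T$, so only permutations with $\sigma(W) \subseteq W$ contribute; since $\sigma$ is a bijection and $|W| = m$ is fixed, this forces $\sigma(W) = W$ and hence $\sigma(T) = T$. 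Each surviving $\sigma$ therefore factors as a pair $(\alpha,\beta)$ with $\alpha$ permuting $T$ and $\beta$ permuting $W$, with $\operatorname{sgn}(\sigma) = \operatorname{sgn}(\alpha)\operatorname{sgn}(\beta)$ and $\prod_i B_{i\sigma(i)} = \big(\prod_{i \in T} X_{i\alpha(i)}\big)\big(\prod_{j \in W} Z_{j\beta(j)}\big)$. The double sum then factors as $\det(X)\det(Z)$, and crucially the off-diagonal block $Y$ never enters the nonzero terms.

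The main pitfall I want to sidestep is the temptation to prove the two-block case by the factorization $\begin{bmatrix} X & Y \\ 0 & Z\end{bmatrix} = \begin{bmatrix} X & 0 \\ 0 & I\end{bmatrix}\begin{bmatrix} I & X^{-1}Y \\ 0 & Z\end{bmatrix}$ together with multiplicativity of the determinant; this is valid only when $X$ is invertible and would otherwise demand an extra density or continuity argument to cover singular $X$. The Leibniz-sum argument avoids invertibility entirely and works over any commutative ring, which is why I would adopt it. As an equally clean alternative, one may induct on $m$ via cofactor (Laplace) expansion along the bottom row of $B$: that row has nonzero entries only in the columns of $Z$, each cofactor is again block upper triangular with top-left block $X$, and the inductive hypothesis reassembles $\det(X)$ times the Laplace expansion of $\det(Z)$. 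Either route closes the argument without any nondegeneracy assumption on the diagonal blocks.
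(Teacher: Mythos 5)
The paper does not prove this statement at all: it is quoted as a known preliminary result and attributed to Horn and Johnson \cite{horn2012}, so there is no in-paper argument to compare yours against. Your proof is correct and complete. The reduction by induction on $k$ to the two-block case is sound (the entire block column below $B_{11}$ is zero, so the regrouped matrix genuinely has the form $\begin{bmatrix} B_{11} & \ast \\ 0 & B' \end{bmatrix}$ with $B'$ again block upper triangular), and the Leibniz-expansion argument for the two-block case is the standard textbook proof: the vanishing of the lower-left block forces $\sigma(W)\subseteq W$, finiteness upgrades this to $\sigma(W)=W$ and $\sigma(T)=T$, the sign is multiplicative because no inversions occur between the two index blocks, and the sum factors as $\det(X)\det(Z)$ with $Y$ never appearing. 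Your remark about avoiding the factorization through $X^{-1}$ is well taken --- the Leibniz route needs no invertibility hypothesis and works over any commutative ring, which is strictly more general than what the paper needs (it only ever applies the theorem with $k=2$ and an invertible-or-not top-left block $R$ and diagonal block $S=-\lambda I$). In short: the proposal supplies a valid, self-contained proof of a result the paper merely cites.
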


\section{Counterexample and Spectral Properties}
The dihedral group $D_{12}=\{\langle a, b \rangle ~|~
(i)~ a^{6}=b^{2}=e ~(ii) ~ba=a^{-1}b=a^{5}b\}$. \\
i.e., $D_{12}=\{ e, a, a^2, a^3, a^4, a^5, b, ab, a^2b, a^3b, a^4b, a^5b \}$. The power graph of the group $D_{12}$ is shown in Figure \ref{fig1}. 
The eigenvalues for the power graph $D_{12}$ are calculated as follows:

\begin{figure}[h]%
\centering
\includegraphics[width=0.3\textwidth]{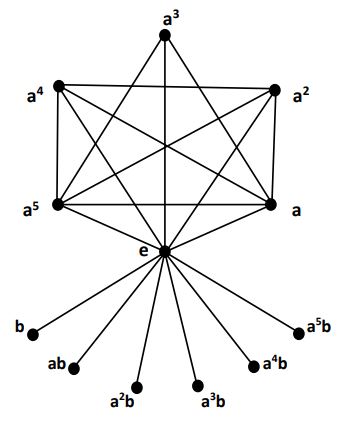}
\caption{Power graph of $D_{12}$}
\label{fig1}
\end{figure}

\begin{example}
\label{ex1}
\emph{In this example, we have calculated all the adjacency eigenvalues of the power graph $D_{12}$. And then we compared it with the given characteristic polynomial in \cite{romdhini2024}. We write the adjacency matrix with row and column index by the vertex set $\{ e, a, a^2, a^3, a^4, a^5, b, ab, a^2b, a^3b, a^4b, a^5b \}$, making an ordering with partition $\{V_{1}, V_{2}, V_{3}, V_{4}, V_{5}\}$. Where $V_{1}=\{e\}, V_{2}=\{a, a^5\}, V_{3}=\{a^2, a^4\}, V_{4}=\{a^4\}, V_{5}=\{b, ab, a^2b, a^3b, a^4b, a^5b\} $. }
\begin{eqnarray*}
    A(\mathcal{G}_{D_{12}})=
    \left[
    \begin{array}{c|c|c|c|c}
    0 & J_{1\times 2} &  J_{1\times 2} & J_{1} & J_{1\times 6}  \\
    \hline
    J_{2\times 1} & (J-I)_{2} & J_{2} & J_{2\times 1} & O_{2\times 6} \\ 
    \hline
    J_{2\times 1} & J_{2\times 1} & (J-I)_{2\times 2} & O_{2\times 1} & J_{2\times 6} \\
    \hline
    J_{1} & J_{1\times 2} & O_{1\times 2} & 3I_{1} & O_{1\times 6} \\
    \hline
    J_{6\times 1} & O_{6\times 2} & O_{6\times 2} & O_{6\times 1} & O_{6\times 6}
    \end{array}
    \right].
\end{eqnarray*}

\emph{The eigenvalues of $ A(\mathcal{G}_{D_{12}})$ are $0^{(5)}, -1^{(2)},-2.924,-1.647,0.356,1.480,4.735.$
But according to the article \cite{romdhini2024}, the characteristic polynomial of $A(\mathcal{G}_{{D}_{12}})$ is $P_{A(\mathcal{G}_{{D}_{12}})}(\lambda)= \lambda^{5}(\lambda+1)^{4}(\lambda^{3}-4\lambda^{2}-11\lambda+24)$. According to this characteristic polynomial, the eigenvalues are $0^{(5)}, -1^{(4)}, -2.84198, 1.61589, 5.22609.$ This clearly shows that these are not the eigenvalues of the power graph of the dihedral group ${D}_{12}$.}
\end{example}

\begin{example}
\emph{Similarly, for this example, we follow the same vertex ordering and partition as in Example \ref{ex1}. Then we calculated all the Laplacian eigenvalues of the power graph $D_{12}$. And then we compared it with the given characteristic polynomial in \cite{romdhini2024}.}
\begin{eqnarray*}
    \mathcal{L}(\mathcal{G}_{D_{12}})=
    \left[
    \begin{array}{c|c|c|c|c}
    11 & -J_{1\times 2} &  -J_{1\times 2} & -J_{1} & -J_{1\times 6}  \\
    \hline
    -J_{2\times 1} & (6I-J)_{2} & -J_{2} & -J_{2\times 1} & O_{2\times 6} \\ 
    \hline
    -J_{2\times 1} & -J_{2\times 1} & (5I-J)_{2\times 2} & O_{2\times 1} & -J_{2\times 6} \\
    \hline
    -J_{1} & -J_{1\times 2} & O_{1\times 2} & (4I-J)_{1} & O_{1\times 6} \\
    \hline
    -J_{6\times 1} & O_{6\times 2} & O_{6\times 2} & O_{6\times 1} & I_{6\times 6}
    \end{array}
    \right].
\end{eqnarray*}
\emph{ The spectrum of $\mathcal{L}(\mathcal{G}_{D_{12}})$ are ${0, 1^{(6)}, 6^{(2)}, 5, 3, 12}$. However, the Laplacian characteristic polynomial presented in \cite{romdhini2024}, is $P_{\mathcal{L}(\mathcal{G}_{{D}_{12}})}(\lambda)= \lambda(\lambda-12)(\lambda-6)^{4}(\lambda-1)^{6}$ which is not correct.}
\end{example}

\begin{example}
\emph{Similarly, we have calculated all the signless laplacian eigenvalues of the power graph $D_{12}$, by following the same vertex ordering and partition as in Example \ref{ex1}. And then we compared it with the given characteristic polynomial in \cite{romdhini2024}.}
\begin{eqnarray*}
    \mathcal{SL}(\mathcal{G}_{D_{12}})=
    \left[
    \begin{array}{c|c|c|c|c}
    11 & J_{1\times 2} &  J_{1\times 2} & J_{1} & J_{1\times 6}  \\
    \hline
    J_{2\times 1} & (4I+J)_{2} & J_{2} & J_{2\times 1} & O_{2\times 6} \\ 
    \hline
    J_{2\times 1} & J_{2\times 1} & (3I+J)_{2\times 2} & O_{2\times 1} & J_{2\times 6} \\
    \hline
    J_{1} & J_{1\times 2} & O_{1\times 2} & (2I+J)_{1} & O_{1\times 6} \\
    \hline
    J_{6\times 1} & O_{6\times 2} & O_{6\times 2} & O_{6\times 1} & I_{6\times 6}
    \end{array}
    \right].
\end{eqnarray*}
\emph{ The characteristic polynomial of the signless Laplacian matrix of $\mathcal{G}_{D_{12}}$ is
$P_{\mathcal{SL}(\mathcal{G}_{{D}_{12}})}(\lambda)=(\lambda-1)^5(\lambda-4)^2(\lambda-3)(\lambda^4-22\lambda^3+137\lambda^2-236\lambda+72)$.
But the signless Laplacian characteristic polynomial presented in \cite{romdhini2024} is
$P_{\mathcal{SL}(\mathcal{G}_{{D}_{12}})}(\lambda)=(\lambda-1)^5(\lambda-4)^4(\lambda-3)(\lambda^3-21\lambda^2+108\lambda-40)$,
which is wrong.}
\end{example}

%%%%%%%%%%%%%%%%%%%%%%%%%%%%%%%%%%%%%%%%%%%%%%%%%%%%%%%%%%%%%%%%%%%%%%%%%%%%%%%%%%

\section{Main Results}
In this section, we discussed the spectra of power graphs of the dihedral group $D_{2n}$ where $n$ is a product of two distinct primes.

In the following theorem, we provide all the adjacency eigenvalues (or characteristic polynomial) of the power graph of the dihedral group $D_{2pq}$ of order $2pq$, where $p$ and $q$ are distinct primes. 

\begin{theorem}
\label{Adj_D2pq}
Consider the dihedral group $G=D_{2pq}$ of order $2pq$, the spectrum of the power graph $\mathcal{G}_{D_{2pq}}$ are $0^{(pq-1)}$, $-1^{(pq-4)}$ and rest of the spectrum are 
 the roots of the equation
\begin{align*}
\lambda^5 + (4 - pq)\lambda^4 + (7 - pq - p - q)\lambda^3
+ M\lambda^2 
+ N\lambda+K=0
\end{align*}
Where $M=2p^2q^2 - 2p^2q - 2pq^2 + p^2 + q^2 - 5p - 5q + 8; 
~N=2p^2q^2 - p^2q + p^2 - pq^2 - pq - 4p + q^2 - 4q + 4; \\
K=2p^3q^2-p^3q^3-p^3q+2p^2q^3-4p^2q^2+3p^2q-pq^3+3pq^2-4pq$
\end{theorem}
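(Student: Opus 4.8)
The plan is to use the equitable partition of $\mathcal{G}_{D_{2pq}}$ coming from the orders of the group elements. First I would split the $2pq$ elements into five cells: $V_1=\{e\}$; the cell $V_2$ of the $(p-1)(q-1)$ rotations of order $pq$ (the generators of $\langle a\rangle$); the cell $V_3$ of the $p-1$ rotations of order $p$; the cell $V_4$ of the $q-1$ rotations of order $q$; and the cell $V_5$ of the $pq$ reflections $a^i b$. Three elementary observations fix every adjacency: each reflection has order $2$, hence is joined only to $e$; the identity and every generator are powers-related to every rotation; and an order-$p$ element and an order-$q$ element are never powers of one another (nor is any rotation a power of a reflection). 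With the cells ordered $V_1,\dots,V_5$ this gives the block adjacency matrix
\[
A(\mathcal{G}_{D_{2pq}})=
\begin{bmatrix}
0 & J & J & J & J\\
J & J-I & J & J & O\\
J & J & J-I & O & O\\
J & J & O & J-I & O\\
J & O & O & O & O
\end{bmatrix},
\]
whose diagonal blocks have orders $1,(p-1)(q-1),p-1,q-1,pq$ and whose off-diagonal blocks are all-ones or all-zeros matrices of the compatible sizes.

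Next I would record that this partition is equitable, so $\mathbb{R}^{2pq}$ splits as an $A$-invariant orthogonal sum $U\oplus W$, where $U$ (dimension $5$) is spanned by the indicator vectors of the cells and $W$ is the space of vectors that sum to zero on each cell. Any vector supported on a single cell $V_i$ and summing to zero there is annihilated by every all-ones off-diagonal block, so $A$ acts on it only through its diagonal block: as $J-I=-I$ on the sum-zero part of each clique cell $V_2,V_3,V_4$, and as the zero matrix on the reflection cell $V_5$. This produces the eigenvalue $-1$ with multiplicity $[(p-1)(q-1)-1]+(p-2)+(q-2)=pq-4$ and the eigenvalue $0$ with multiplicity $pq-1$, accounting for $2pq-5$ eigenvalues and matching the claimed blocks $0^{(pq-1)}$ and $-1^{(pq-4)}$.

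Finally, the five remaining eigenvalues are exactly those of the quotient (divisor) matrix
\[
B=
\begin{bmatrix}
0 & (p-1)(q-1) & p-1 & q-1 & pq\\
1 & (p-1)(q-1)-1 & p-1 & q-1 & 0\\
1 & (p-1)(q-1) & p-2 & 0 & 0\\
1 & (p-1)(q-1) & 0 & q-2 & 0\\
1 & 0 & 0 & 0 & 0
\end{bmatrix},
\]
whose $(i,j)$ entry is the common number of neighbours in $V_j$ of a vertex in $V_i$; indeed $A(\mathcal{G}_{D_{2pq}})\,S=S\,B$ for the characteristic matrix $S$ of the partition, so the characteristic polynomial of $A(\mathcal{G}_{D_{2pq}})$ factors as $(\lambda+1)^{pq-4}\lambda^{pq-1}\det(\lambda I_5-B)$. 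I would evaluate $\det(\lambda I_5-B)$ by expanding along the last column, which has only the two nonzero entries $-pq$ and $\lambda$, reducing the problem to two $4\times4$ determinants, and then collect powers of $\lambda$ to obtain the quintic. The essential difficulty is purely computational: carrying out this symbolic $5\times5$ determinant and simplifying its coefficients into closed form without algebraic error. As a consistency check I would specialize to $D_{12}$ ($p=2,q=3$): here $B$ has characteristic polynomial $\lambda^5-2\lambda^4-16\lambda^3+8\lambda^2+33\lambda-12$, whose roots are precisely the nontrivial adjacency eigenvalues of Example \ref{ex1} and whose quadratic, linear and constant terms agree with the displayed $M$, $N$, $K$. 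This same specialization indicates that the coefficient of $\lambda^3$ should read $7-3pq-p-q$ (equal to $-16$ when $p=2,q=3$) rather than $7-pq-p-q$, while $4-pq$, $M$, $N$ and $K$ remain in force.
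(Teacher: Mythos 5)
Your proof is correct and follows essentially the same route as the paper: the paper's row and column operations on $\det(A(\mathcal{G}_{D_{2pq}})-\lambda I)$ are precisely the elimination that realizes the equitable-partition/quotient-matrix decomposition you describe, and its final $5\times 5$ determinant is $\det(B-\lambda I)$ for your matrix $B$ (with the cells $V_3$ and $V_4$ interchanged, which is immaterial since all the coefficients are symmetric in $p$ and $q$). Your correction of the $\lambda^3$ coefficient is also right: the sum of the $2\times 2$ principal minors of $B$ works out to $7-3pq-p-q$ rather than the stated $7-pq-p-q$, and this is confirmed by Example~\ref{ex1}, where the listed nontrivial eigenvalues of $\mathcal{G}_{D_{12}}$ have pairwise-product sum $-16$ (forcing the $\lambda^3$ coefficient to be $-16$, not $-4$) while the remaining coefficients $4-pq$, $M$, $N$, $K$ all check out.
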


\begin{proof}
It is obvious that the vertex set of the $\mathcal{G}_{D_{2pq}}$ will be $\{e, a, a^2, ..., a^{pq-1}, b, ab, a^{2}b, ..., a^{pq-1}b\}$. Now, let's partition these vertices as $\{V_{1},V_{2},V_{3},V_{4},V_{5}\}$ such that $V_1$ contains the identity element of $D_{2pq}$ and $V_2$ contain all the generators that are $V_2=\{a^i:\gcd(i,pq)=1\}$ so that $|V_1|=1$ and order of $V_2=\phi(pq)=(p-1)(q-1)$, also $V_3=\{p,2p,...,(q-1)p\}$, $V_4=\{q,2q,...,(p-1)q\}$ and $V_5=\{b,ab,...,a^{pq-1}b\}$. Then we get the adjacency matrix, in which rows and columns are indexed by the vertex set using the partition $\{V_{1}, V_{2}, V_{3}, V_{4}, V_{5}\}$.

\begin{equation}
A(\mathcal{G}_{D_{2pq}})=
\begin{bmatrix}
O_1 & J_{1\times\phi(pq)} & J_{1\times(q-1)} & J_{1\times(p-1)} & J_{1\times pq}\\
J_{\phi(pq)\times 1} & (J-I)_{\phi(pq)} & J_{\phi(pq)\times (q-1)} & J_{\phi(pq)\times(p-1)} & O_{\phi(pq)\times pq}\\
J_{(q-1)\times 1} & J_{(q-1)\times\phi(pq)} & (J-I)_{(q-1)} & O_{(q-1)\times(p-1)} & O_{(q-1)\times pq}\\
J_{(p-1)\times 1} & J_{(p-1)\times\phi(pq)} & O_{(p-1)\times(q-1)} & (J-I)_{(p-1)} & O_{(p-1)\times pq}\\
J_{pq\times 1} & O_{pq\times\phi(pq)} & O_{pq\times(q-1)} & O_{pq\times(p-1)} & O_{pq\times pq} 
\end{bmatrix}
\label{eq:einstein}
\end{equation}
Now, implement the following steps to find the characteristic polynomial of the matrix $A(\mathcal{G}_{D_{2pq}})-\lambda I_{2pq}$.  
\begin{enumerate}
     \item $R_{pq+i} \to R_{pq+i} - R_{pq+1}$  for \( 2 \leq i \leq 2pq \).
    \item  $C_{pq+1}  \to   C_{pq+1} + (C_{pq+2} + \cdots + C_{2pq})$.
\end{enumerate}
So the characteristic polynomial (in terms of determinant form) is

\begin{eqnarray*}
    P_{A(\mathcal{G}_{D_{2pq}})}(\lambda)=\left|
    \begin{array}{ccccccccccc|cccc}
                -\lambda & 1 & \cdots & 1  &  1 & \cdots & 1 & 1 & \cdots & 1 & pq & 1 & \cdots & 1 \\
            
                1 & -\lambda & \cdots & 1  &  1 & \cdots & 1 & 1 & \cdots & 1 & 0 & 0 & \cdots & 0 \\
                \vdots & \vdots  & \ddots & \vdots  &  \vdots & \ddots & \vdots & \vdots & \ddots & \vdots & \vdots & \vdots & \ddots & \vdots \\
                1 & 1 &  \cdots & -\lambda  &  1 & \cdots & 1 & 1 & \cdots & 1 & 0 & 0 & \cdots & 0 \\
              
                1 & 1 & \cdots & 1  &  -\lambda & \cdots & 1 & 0 & \cdots & 0 & 0 & 0 & \cdots & 0 \\ 
                \vdots & \vdots & \ddots & \vdots  &  \vdots & \ddots & \vdots & \vdots & \ddots & \vdots & \vdots  & \vdots & \ddots & \vdots \\
                1 & 1 & \cdots & 1  &  1 & \cdots & -\lambda & 0 & \cdots & 0 & 0 & 0 & \cdots & 0 \\
            
              1 & 1 & \cdots & 1  &  0 & \cdots & 0 & -\lambda & \cdots & 1 & 0 & 0 & \cdots & 0  \\
              \vdots & \vdots & \ddots & \vdots  &  \vdots & \ddots & \vdots & \vdots & \ddots & \vdots & \vdots & \vdots & \ddots & \vdots \\
                 1 & 1 & \cdots & 1  &  0 & \cdots & 0 & 1 & \cdots & -\lambda & 0 & 0 & \cdots & 0 \\
                 
                1 & 0 & \cdots & 0  &  0 & \cdots & 0 & 0 & \cdots & 0 & -\lambda & 0 & \cdots & 0 \\
                \hline
                0 & 0 & \cdots & 0  &  0 & \cdots & 0 & 0 & \cdots & 0 & 0 & -\lambda & \cdots & 0 \\
                \vdots & \vdots & \ddots & \vdots  &  \vdots & \ddots & \vdots & \vdots & \ddots & \vdots & \vdots & \vdots & \ddots & \vdots \\
                0 & 0 & \cdots & 0  &  0 & \cdots & 0 & 0 & \cdots & 0 & 0 & 0 & \cdots & -\lambda \\
                \end{array}
                \right|.
\end{eqnarray*} 
$=\left|
\begin{array}{cc}
R & Q \\
O & S
\end{array}
\right|$

Where, 
$R = \left[
\begin{array}{ccccccccccc}
-\lambda & 1 & \cdots & 1 & 1 & \cdots & 1 & 1 & \cdots & 1 & pq \\
1 & -\lambda & \cdots & 1 & 1 & \cdots & 1 & 1 & \cdots & 1 & 0 \\
\vdots & \vdots & \ddots & \vdots & \vdots & \ddots & \vdots & \vdots & \ddots & \vdots & \vdots \\
1 & 1 & \cdots & -\lambda & 1 & \cdots & 1 & 1 & \cdots & 1 & 0 \\
1 & 1 & \cdots & 1 & -\lambda & \cdots & 1 & 0 & \cdots & 0 & 0 \\
\vdots & \vdots & \ddots & \vdots & \vdots & \ddots & \vdots & \vdots & \ddots & \vdots & \vdots \\
1 & 1 & \cdots & 1 & 1 & \cdots & -\lambda & 0 & \cdots & 0 & 0 \\
1 & 1 & \cdots & 1 & 0 & \cdots & 0 & -\lambda & \cdots & 1 & 0 \\
\vdots & \vdots & \ddots & \vdots & \vdots & \ddots & \vdots & \vdots & \ddots & \vdots & \vdots \\
1 & 1 & \cdots & 1 & 0 & \cdots & 0 & 1 & \cdots & -\lambda & 0 \\
1 & 0 & \cdots & 0 & 0 & \cdots & 0 & 0 & \cdots & 0 & -\lambda \\
\end{array}
\right]$,
$Q=\left[ 
\begin{array}{ccc}
1 & \cdots & 1 \\
0 & \cdots & 0 \\
\vdots & \ddots & \vdots \\
0 & \cdots & 0 \\
0 & \cdots & 0 \\
\vdots & \ddots & \vdots \\
0 & \cdots & 0 \\
0 & \cdots & 0 \\
\vdots & \ddots & \vdots \\
0 & \cdots & 0 \\
0 & \cdots & 0 \\
\end{array}
\right]$

$O=\left[
\begin{array}{ccccccccccc}
0 & 0 & \cdots & 0 & 0 & \cdots & 0 & 0 & \cdots & 0 & 0\\
\vdots & \vdots & \ddots & \vdots & \vdots & \ddots & \vdots & \vdots & \ddots & \vdots & \vdots\\
0 & 0 & \cdots & 0 & 0 & \cdots & 0 & 0 & \cdots & 0 & 0\\
\end{array}
\right],$  
$S=\left[ 
\begin{array}{ccc}
-\lambda & \cdots & 0 \\
\vdots & \ddots & \vdots \\
0 & \cdots & -\lambda \\
\end{array}
\right]$

By applying Theorem \ref{upperblockmat}, we get  \[ P_{A(\mathcal{G}_{D_{2pq}})}(\lambda)=\det(R)\det(S)
\]
Now, we know that the $\det(S)=(-\lambda)^{pq-1}$. So, to find the characteristic polynomial, we need to solve the determinant of $R$.

Now apply the following steps on $R$
\begin{enumerate}
\item $R_i\to R_i-R_2$, $3\leq i\leq \phi(pq)+1$
\item $R_j\to R_j-R_{\phi(pq)+2}$, $\phi(pq)+3\leq j\leq \phi(pq)+q$
\item $R_k\to R_k-R_{\phi(pq)+q+1}$, $\phi(pq)+q+2\leq k\leq pq$
\end{enumerate}
After row transformation, we will apply column transformation as
\begin{enumerate}
    \item $C_2\to C_2+C_3+\cdots+C_{\phi(pq)+1}$
\item $C_{\phi(pq)+2}\to C_{\phi(pq)+2}+C_{\phi(pq)+3}+\cdots+C_{\phi(pq)+q}$
\item $C_{\phi(pq)+q+1}\to C_{\phi(pq)+q+1}+C_{\phi(pq)+q+2}+\cdots+C_{pq}$.
\end{enumerate}
Then we get 
\[
\det(R)=(\lambda+1)^{pq-4}
\begin{vmatrix}
-\lambda & (p-1)(q-1) & q-1 & p-1 & pq \\
1 & (p-1)(q-1)-1-\lambda & q-1 & p-1 & 0 \\
1 & (p-1)(q-1) & q-2-\lambda & 0 & 0 \\
1 & (p-1)(q-1) & 0 & (p-2)-\lambda & 0 \\
1 & 0 & 0 & 0 & -\lambda
\end{vmatrix}
\]

\begin{align*}
=(\lambda+1)^{pq-4}\{\lambda^5 + (4 - pq)\lambda^4 + (7 - pq - p - q)\lambda^3
+ M\lambda^2 
+ N\lambda+K\}
\end{align*}
Where $M=2p^2q^2 - 2p^2q - 2pq^2 + p^2 + q^2 - 5p - 5q + 8; 
~N=2p^2q^2 - p^2q + p^2 - pq^2 - pq - 4p + q^2 - 4q + 4; \\
K=2p^3q^2-p^3q^3-p^3q+2p^2q^3-4p^2q^2+3p^2q-pq^3+3pq^2-4pq$

Therefore, the characteristic polynomial is 
$$P_{A(\mathcal{G}_{D_{2pq}})}(\lambda)
=(-\lambda)^{pq-1}(\lambda+1)^{pq-4}\{\lambda^5 + (4 - pq)\lambda^4 + (7 - pq - p - q)\lambda^3
+ M\lambda^2 
+ N\lambda+K\}$$

Where $M=2p^2q^2 - 2p^2q - 2pq^2 + p^2 + q^2 - 5p - 5q + 8; 
~N=2p^2q^2 - p^2q + p^2 - pq^2 - pq - 4p + q^2 - 4q + 4; \\
K=2p^3q^2-p^3q^3-p^3q+2p^2q^3-4p^2q^2+3p^2q-pq^3+3pq^2-4pq$

\end{proof}

The next theorem provides all the laplacian eigenvalues (or characteristic polynomial) of the power graph of the dihedral group $D_{2pq}$ of order $2pq$, where $p$ and $q$ are distinct primes.
\begin{theorem}
Let $G = D_{2pq}$ be the dihedral group of order $2pq$, then the spectrum of  Laplacian of $\mathcal{G}_{D_{2pq}}$ are
$$0,1^{(pq)},pq^{(\phi(pq))},(pq-p+1)^{(q-2)},(pq-q+1)^{(p-2)},pq-p-q+2,2pq$$.
\end{theorem}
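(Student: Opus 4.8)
The plan is to reduce the problem to the cyclic case through Theorem \ref{theo2.5} and then pin down the Laplacian spectrum of $\mathcal{G}_{\mathbb{Z}_{pq}}$ explicitly. Since $n=pq$ is composite and $pq\ge 6>3$, Theorem \ref{theo2.5} applies and expresses every Laplacian eigenvalue of $\mathcal{G}_{D_{2pq}}$ in terms of those of $\mathcal{G}_{\mathbb{Z}_{pq}}$: a single new eigenvalue $2pq$, a block of $\phi(pq)$ copies of the value $pq$, the remaining non-extremal cyclic eigenvalues, a block of $pq$ copies of $1$, and a single $0$. Hence the whole statement follows once the Laplacian spectrum of $\mathcal{G}_{\mathbb{Z}_{pq}}$ is known, so the first step is to determine that spectrum.

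First I would describe the structure of $\mathcal{G}_{\mathbb{Z}_{pq}}$ through the subgroup lattice. Writing $U$ for the identity together with the $\phi(pq)$ generators, $P$ for the $p-1$ elements of order $p$, and $Q$ for the $q-1$ elements of order $q$, one checks that $U$ is a clique adjacent to every other vertex, that $P$ and $Q$ are cliques, and that there are no edges between $P$ and $Q$ (neither prime-order subgroup contains the other). Consequently $\mathcal{G}_{\mathbb{Z}_{pq}}$ is the join $K_{1+\phi(pq)}\vee\bigl(K_{p-1}\sqcup K_{q-1}\bigr)$, with $|U|=1+\phi(pq)$ and $|P|+|Q|=p+q-2$.

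Then I would invoke the standard Laplacian formula for a join $G_1\vee G_2$: its spectrum consists of $0$, the value $n_1+n_2$, the nonzero Laplacian eigenvalues of $G_1$ each shifted by $n_2=|V(G_2)|$, and all-but-one of the Laplacian eigenvalues of $G_2$ each shifted by $n_1=|V(G_1)|$. Using the complete-graph and disjoint-clique spectra with $n_1=1+\phi(pq)$ and $n_2=p+q-2$ (so that $n_1+n_2=pq$), and simplifying, yields the cyclic spectrum
\[
0,\ (pq-p-q+2)^{(1)},\ (pq-q+1)^{(p-2)},\ (pq-p+1)^{(q-2)},\ pq^{(1+\phi(pq))}.
\]
The one delicate feature here is that because $K_{p-1}\sqcup K_{q-1}$ is disconnected it carries a second zero eigenvalue, which after the shift by $n_1$ produces the isolated eigenvalue $pq-p-q+2$, while the value $pq$ collects multiplicity $1+\phi(pq)$.

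Finally I would substitute this cyclic spectrum into Theorem \ref{theo2.5} and collect terms, and this substitution is where the main obstacle lies: the multiplicity accounting must be done carefully. Theorem \ref{theo2.5} assigns the value $pq=n$ to exactly $\phi(pq)$ indices although the cyclic eigenvalue $pq$ has multiplicity $1+\phi(pq)$, so one copy of $pq$ is effectively promoted to the new top eigenvalue $2pq$, while the cyclic eigenvalue $0$ is not carried into the dihedral list at all (the dihedral $0$ and the block $1^{(pq)}$ are supplied separately by the last two cases of the theorem). Tracking these indices correctly produces exactly $0,\ 1^{(pq)},\ pq^{(\phi(pq))},\ (pq-p+1)^{(q-2)},\ (pq-q+1)^{(p-2)},\ pq-p-q+2,\ 2pq$, and a dimension count verifying that the multiplicities sum to $2pq$ closes the argument; as a sanity check, setting $p=2,q=3$ recovers the spectrum $0,1^{(6)},6^{(2)},5,3,12$ of $\mathcal{G}_{D_{12}}$ found in Section 3. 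An alternative self-contained route, avoiding the join formula, is to compute $\det\bigl(\mathcal{L}(\mathcal{G}_{D_{2pq}})-\lambda I\bigr)$ directly from the block form of the Laplacian, clearing the $pq\times pq$ reflection block by row reduction (which isolates the factor governing the eigenvalue $1$) and then reducing to a small determinant via Theorem \ref{upperblockmat}, exactly as in the proof of Theorem \ref{Adj_D2pq}.
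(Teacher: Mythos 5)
Your argument is correct, and the multiplicity bookkeeping checks out: the cyclic spectrum you derive for $\mathcal{G}_{\mathbb{Z}_{pq}}$ via the join decomposition $K_{1+\phi(pq)}\vee(K_{p-1}\sqcup K_{q-1})$ is right, and feeding it into Theorem \ref{theo2.5} (one copy of $pq$ absorbed into the index-$1$ slot and replaced by $2pq$, the cyclic $0$ discarded, the blocks $1^{(pq)}$ and $0$ appended) gives exactly the claimed dihedral spectrum with total multiplicity $2pq$. However, this is not the paper's route: the paper proves the theorem by a direct computation on $\mathcal{L}(\mathcal{G}_{D_{2pq}})-\lambda I$, using the same vertex partition and the same row/column eliminations as in Theorem \ref{Adj_D2pq} to peel off the factors $(\lambda-1)^{pq-1}$, $(\lambda-pq)^{\phi(pq)-1}$, $(\lambda-(pq-p+1))^{q-2}$, $(\lambda-(pq-q+1))^{p-2}$ and reduce everything to a $5\times 5$ determinant — essentially your ``alternative self-contained route.'' The authors explicitly acknowledge in the Note following the theorem that the result can be obtained as a corollary of Theorem \ref{theo2.5}, which is precisely what you do; so your proof is the known derivation the paper deliberately bypasses. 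What your approach buys is brevity and a structural explanation of where each eigenvalue comes from (clique blocks, disconnection of $K_{p-1}\sqcup K_{q-1}$, the join vertex count $n_1+n_2=pq$); what the paper's approach buys is independence from the external Theorem \ref{theo2.5} and from the join formula (which is standard but not among the stated preliminaries — if you keep your route, you should cite a source for it, e.g.\ the Laplacian spectrum of a join as in Brouwer--Haemers), and uniformity with the adjacency and signless Laplacian computations, for which no analogue of Theorem \ref{theo2.5} is available.
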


\begin{proof}
The construction of the Laplacian matrix of $\mathcal{G}_{D_{2pq}}$ is based on its degree and adjacency matrices. Using the same vertex ordering and partition as in Theorem \ref{Adj_D2pq}, the degree matrix of $\mathcal{G}_{D_{2pq}}$ is
\begin{equation}
\mathcal{D}(\mathcal{G}_{D_{2pq}})=
\begin{bmatrix}
2pq-1 & O_{1\times\phi(pq)} & O_{1\times(q-1)} & O_{1\times(p-1)} & O_{1\times pq}\\
O_{\phi(pq)\times 1} & (pq-1)I_{\phi(pq)} & O_{\phi(pq)\times (q-1)} & O_{\phi(pq)\times(p-1)} & O_{\phi(pq)\times pq}\\
O_{(q-1)\times 1} & O_{(q-1)\times\phi(pq)} & (pq-p)I_{(q-1)} & O_{(q-1)\times(p-1)} & O_{(q-1)\times pq}\\
O_{(p-1)\times 1} & O_{(p-1)\times\phi(pq)} & O_{(p-1)\times(q-1)} & (pq-q)I_{(p-1)} & O_{(p-1)\times pq}\\
O_{pq\times 1} & O_{pq\times\phi(pq)} & O_{pq\times(q-1)} & O_{pq\times(p-1)} & I_{pq\times pq} 
\end{bmatrix}
\end{equation}
From the definition of the Laplacian matrix of \( \mathcal{G}(D_{2pq}) \), it follows that  
\[
\mathcal{L}(\mathcal{G}_{D_{2pq}}) = \mathcal{D}(\mathcal{G}_{D_{2pq}}) - A(\mathcal{G}_{D_{2pq}})
\]
where \( \mathcal{D}(\mathcal{G}_{D_{2pq}}) \) represents the degree matrix and \( A(\mathcal{G}_{D_{2pq}}) \) denotes the adjacency matrix of \( \mathcal{G}_{D_{2pq}} \).
Therefore, 
\[
\mathcal{L}(\mathcal{G}_{D_{2pq}}) =
\small
\begin{bmatrix}
2pq-1 & -J_{1\times\phi(pq)} & -J_{1\times(q-1)} & -J_{1\times(p-1)} & -J_{1\times pq}\\
-J_{\phi(pq)\times 1} & (pq I-J)_{\phi(pq)} & -J_{\phi(pq)\times (q-1)} & -J_{\phi(pq)\times(p-1)} & O_{\phi(pq)\times pq}\\
-J_{(q-1)\times 1} & -J_{(q-1)\times\phi(pq)} & ((pq-p+1)I-J)_{(q-1)} & O_{(q-1)\times(p-1)} & O_{(q-1)\times pq}\\
-J_{(p-1)\times 1} & -J_{(p-1)\times\phi(pq)} & O_{(p-1)\times(q-1)} & ((pq-q+1)I-J)_{(p-1)} & O_{(p-1)\times pq}\\
J_{pq\times 1} & O_{pq\times\phi(pq)} & O_{pq\times(q-1)} & O_{pq\times(p-1)} & I_{pq\times pq} 
\end{bmatrix}.
\]
Now, perform the following transformations on the matrix $\mathcal{L}(\mathcal{G}_{D_{2pq}})- \lambda I_{2pq}$, to compute the characteristic polynomial:
\begin{enumerate}
    \item $R_{pq+i} \to R_{pq+i} - R_{pq+1}$  for \( 2 \leq i \leq 2pq \).
    \item  $C_{pq+1}  \to   C_{pq+1} + (C_{pq+2} + \cdots + C_{2pq})$.
\end{enumerate}
After this, we will have a $2\times2$ block upper triangular matrix (a similar kind of matrix in Theorem \ref{Adj_D2pq}).
Again, by applying the row transformation on the 1st block to this Laplacian matrix
\begin{enumerate}
\item $R_i\to R_i-R_2$, $3\leq i\leq \phi(pq)+1$
\item $R_j\to R_j-R_{\phi(pq)+2}$, $\phi(pq)+3\leq j\leq \phi(pq)+q$
\item $R_k\to R_k-R_{\phi(pq)+q+1}$, $\phi(pq)+q+2\leq k\leq pq$
\end{enumerate}
After this, on applying the column transformation, we have
\begin{enumerate}
\item $C_2\to C_2+C_3+\cdots+C_{\phi(pq)+1}$
\item $C_{\phi(pq)+2}\to C_{\phi(pq)+2}+C_{\phi(pq)+3}+\cdots+C_{\phi(pq)+q}$
\item $C_{\phi(pq)+q+1}\to C_{\phi(pq)+q+1}+C_{\phi(pq)+q+2}+\cdots+C_{pq}$  
\end{enumerate}

We will get the following:

\[P_{\mathcal{L}(\mathcal{G}_{D_{2pq}})}(\lambda) =
(\lambda-1)^{pq-1}(\lambda-(pq-q+1))^{p-2}(\lambda-(pq-p+1))^{q-2}(\lambda-pq)^{\phi(pq)-1}det(B) .\]
Where
\[B=
    \begin{pmatrix}
2pq-1-\lambda & -(p-1)(q-1) & 1-q & 1-p & -pq \\
-1 & p+q-1-\lambda & 1-q & 1-p & 0 \\
-1 & -(p-1)(q-1) & pq-p-q+2-\lambda & 0 & 0 \\
-1 & -(p-1)(q-1) & 0 & pq-p-q+2-\lambda & 0 \\
-1 & 0 & 0 & 0 & 1-\lambda
\end{pmatrix}.
\]
Again applying some rows and columns operations, one can get \[det(M)=(-\lambda)(\lambda-1)(\lambda-pq)(\lambda-pq+p+q-2)(\lambda-2pq).\]
Finally, \[P_{\mathcal{L}(\mathcal{G}_{D_{2pq}})}(\lambda) =(-\lambda)
(\lambda-1)^{pq}(\lambda-pq)^{\phi(pq)}(\lambda-(pq-p+1))^{q-2}(\lambda-(pq-q+1))^{p-2}(\lambda-(pq-p-q+2))(\lambda-2pq).\]
% Now, on solving this $5\times 5$ determinant, we have eigenvalues as 
% $$0,~ 1^{(pq)}, ~pq^{(\phi(pq))},~ (pq-p+1)^{(q-2)},~ (pq-q+1)^{(p-2)},~ pq-p-q+2,~ 2pq.$$

\end{proof}
\begin{note}
   \emph{ The above result can be found in \cite{chattopadhyay2015} as a corollary of the theorem \ref{theo2.5}. However, our approach is different and more convenient.}
\end{note}

Lastly, we provide all the signless Laplacian eigenvalues (or characteristic polynomial) of the power graph of the dihedral group $D_{2pq}$ of order $2pq$, where $p$ and $q$ are distinct primes.

\begin{theorem}
Let $G = D_{2pq}$ be the dihedral group of order $2pq$, then the spectrum of signless laplacian of $\mathcal{G}_{D_{2pq}}$ are
$1^{(pq-1)}, (pq-2)^{\phi(pq)}, (pq-p-1)^{(q-2)}, (pq-q-1)^{(p-2)}$
and the rest of the spectrum are the roots of the biquadratic equation
\begin{align*}
\lambda^4 -(5pq-p-q-3)\lambda^3 -X\lambda^2 -Y\lambda -Z = 0
\end{align*}
Where $ X=-8p^2q^2+4p^2q+4pq^2+5pq+p+q-4; ~Y=4p^3q^3-4p^3q^2-4p^2q^3+4p^2q^2-2p^2q-2pq^2-2pq+4p+4q; ~Z=-2p^3q^3+2p^3q^2+2p^2q^2-2p^2q-2pq^2-4p-4q+8.$
\end{theorem}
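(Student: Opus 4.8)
The plan is to mirror the method used for the adjacency and Laplacian matrices in the two preceding theorems, exploiting that $\mathcal{SL}(\mathcal{G}_{D_{2pq}}) = \mathcal{D}(\mathcal{G}_{D_{2pq}}) + A(\mathcal{G}_{D_{2pq}})$. Using the same partition $\{V_1,V_2,V_3,V_4,V_5\}$ and vertex ordering as in Theorem \ref{Adj_D2pq}, the degree matrix adds $2pq-1$, $(pq-1)I$, $(pq-p)I$, $(pq-q)I$, and $I$ on the respective diagonal blocks. Hence the signless Laplacian has the analogous block form, with diagonal blocks $((pq-2)I+J)_{\phi(pq)}$, $((pq-p-1)I+J)_{q-1}$, $((pq-q-1)I+J)_{p-1}$ and $I_{pq}$, and with every off-diagonal coupling between $V_1$ and the other classes, and between $V_2$ and $V_3,V_4$, equal to a $+J$ block. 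First I would write this matrix out and pass to $\mathcal{SL}-\lambda I$.

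Next I would peel off the eigenvalues in two stages, exactly as before. Applying $R_{pq+i}\to R_{pq+i}-R_{pq+1}$ on the $V_5$ rows followed by $C_{pq+1}\to C_{pq+1}+(C_{pq+2}+\cdots+C_{2pq})$ makes the matrix block upper triangular; by Theorem \ref{upperblockmat} the lower diagonal block $(1-\lambda)I_{pq-1}$ contributes the factor $(1-\lambda)^{pq-1}$, i.e.\ the eigenvalue $1$ with multiplicity $pq-1$. Then, on the remaining $(pq+1)\times(pq+1)$ leading block, the within-class operations (subtracting a reference row in each of $V_2,V_3,V_4$ and then forming the corresponding column sums, exactly as in Theorem \ref{Adj_D2pq}) exploit the $cI+J$ structure: subtracting a reference row cancels the common $J$-couplings and leaves the repeated diagonal value. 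This extracts $(pq-2-\lambda)^{\phi(pq)-1}(pq-p-1-\lambda)^{q-2}(pq-q-1-\lambda)^{p-2}$ and reduces the problem to a single $5\times5$ determinant $\det(B)$, where
\[
B=\begin{pmatrix} 2pq-1-\lambda & (p-1)(q-1) & q-1 & p-1 & pq \\ 1 & 2pq-p-q-1-\lambda & q-1 & p-1 & 0 \\ 1 & (p-1)(q-1) & pq-p+q-2-\lambda & 0 & 0 \\ 1 & (p-1)(q-1) & 0 & pq+p-q-2-\lambda & 0 \\ 1 & 0 & 0 & 0 & 1-\lambda \end{pmatrix}.
\]

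It then remains to evaluate $\det(B)$, a degree-$5$ polynomial in $\lambda$. The key observation is that $\lambda=pq-2$ is a root: substituting it makes the $3\times3$ submatrix of $B$ indexed by $V_2,V_3,V_4$ singular, since after factoring $(p-1)(q-1)$ from its first column the three rows become linearly dependent; a cofactor expansion of $\det(B)$ along its sparse last column and last row shows that both surviving minors contain this singular block, so $\det(B)=0$ there. Thus $(\lambda-(pq-2))$ divides $\det(B)$, and this last copy combines with the $\phi(pq)-1$ copies already extracted to give the eigenvalue $pq-2$ with total multiplicity $\phi(pq)$. Dividing out this linear factor leaves the monic quartic $\lambda^4-(5pq-p-q-3)\lambda^3-X\lambda^2-Y\lambda-Z$, whose four roots are the remaining eigenvalues, and collecting all factors yields the claimed spectrum. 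I expect the main obstacle to be purely computational: expanding the $5\times5$ determinant symbolically in $p,q,\lambda$, isolating the factor $(\lambda-(pq-2))$, and verifying that the cofactor matches the stated coefficients $X,Y,Z$. To keep this manageable I would expand along the sparse fifth row and fifth column of $B$ first, and I would cross-check the result using the identity that the $\lambda^3$ coefficient $5pq-p-q-3$ together with $pq-2$ must reproduce the $\lambda^4$ coefficient $6pq-p-q-5 = \sum_i d_{ii}$ of $\det(B)$.
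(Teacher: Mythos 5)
Your proposal follows essentially the same route as the paper: the same five-part vertex partition, the same two-stage row/column eliminations producing the factors $(\lambda-1)^{pq-1}$, $(\lambda-(pq-2))^{\phi(pq)-1}$, $(\lambda-(pq-p-1))^{q-2}$, $(\lambda-(pq-q-1))^{p-2}$, and the same reduction to the identical $5\times 5$ determinant from which the extra factor $(\lambda-(pq-2))$ and the quartic are extracted. The only refinement worth noting is that your justification for $\lambda=pq-2$ being a root should be phrased as a linear dependence among rows $2$--$4$ of the full $5\times 5$ matrix (the dependence coefficients sum to zero, so it extends through the all-ones first column and the zero last column), since a minor merely containing a singular block need not vanish; with that fixed, your argument is correct and matches the paper's computation.
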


\begin{proof}
From the definition of the Signless Laplacian matrix of \( \mathcal{G}_{D_{2pq}}\), it follows that  
\[
\mathcal{SL}(\mathcal{G}_{D_{2pq}}) = \mathcal{D}(\mathcal{G}_{D_{2pq}})+A(\mathcal{G}_{D_{2pq}})
\]
where \( \mathcal{D}(\mathcal{G}_{D_{2pq}}) \) represents the degree matrix and \( A(\mathcal{G}_{D_{2pq}}) \) denotes the adjacency matrix of \( \mathcal{G}_{D_{2pq}} \).
Therefore, 
\[
\mathcal{SL}(\mathcal{G}_{D_{2pq}})= 
\begin{bmatrix}
2pq-1 & J_{1\times\phi(pq)} & J_{1\times(q-1)} & J_{1\times(p-1)} & J_{1\times pq}\\
J_{\phi(pq)\times 1} & ((pq-2)I+J)_{\phi(pq)} & J_{\phi(pq)\times (q-1)} & J_{\phi(pq)\times(p-1)} & O_{\phi(pq)\times pq}\\
J_{(q-1)\times 1} & J_{(q-1)\times\phi(pq)} & ((pq-p-1)I+J)_{(q-1)} & O_{(q-1)\times(p-1)} & O_{(q-1)\times pq}\\
J_{(p-1)\times 1} & J_{(p-1)\times\phi(pq)} & O_{(p-1)\times(q-1)} & ((pq-q-1)I+J)_{(p-1)} & O_{(p-1)\times pq}\\
J_{pq\times 1} & O_{pq\times\phi(pq)} & O_{pq\times(q-1)} & O_{pq\times(p-1)} & I_{pq\times pq} 
\end{bmatrix}
\]
Similarly, execute the following operations on the matrix $\mathcal{SL}(\mathcal{G}_{D_{2pq}})-\lambda I_{2pq}$, to compute the characteristic polynomial:
\begin{enumerate}
    \item $R_{pq+i} \to R_{pq+i} - R_{pq+1}$  for \( 2 \leq i \leq 2pq \).
    \item  $C_{pq+1}  \to   C_{pq+1} + (C_{pq+2} + \cdots + C_{2pq})$.
\end{enumerate}
After this, we will have a $2\times2$ block upper triangular matrix (a similar kind of matrix in Theorem \ref{Adj_D2pq}). Again, the following row transformation on the 1st block is applied to this signless Laplacian matrix.
\begin{enumerate}
\item $R_i\to R_i-R_2$, $3\leq i\leq \phi(pq)+1$
\item $R_j\to R_j-R_{\phi(pq)+2}$, $\phi(pq)+3\leq j\leq \phi(pq)+q$
\item $R_k\to R_k-R_{\phi(pq)+q+1}$, $\phi(pq)+q+2\leq k\leq pq$
\end{enumerate}
After this, on applying the column transformation, we have
\begin{enumerate}
\item $C_2\to C_2+C_3+\cdots+C_{\phi(pq)+1}$
\item $C_{\phi(pq)+2}\to C_{\phi(pq)+2}+C_{\phi(pq)+3}+\cdots+C_{\phi(pq)+q}$
\item $C_{\phi(pq)+q+1}\to C_{\phi(pq)+q+1}+C_{\phi(pq)+q+2}+\cdots+C_{pq}$  
\end{enumerate}
We get the following format of the characteristic polynomial:
\[P_{\mathcal{SL}(\mathcal{G}_{D_{2pq}})}(\lambda) =
(\lambda-1)^{pq-1}(\lambda-(pq-q-1))^{p-2}(\lambda-(pq-p-1))^{q-2}(\lambda-(pq-2))^{\phi(pq)-1}det(C).\]
Where
\[C=
    \begin{pmatrix}
2pq-1-\lambda & (p-1)(q-1) & q-1 & p-1 & pq \\
1 & 2pq-p-q-1-\lambda & q-1 & p-1 & 0 \\
1 & (p-1)(q-1) & pq-p+q-2-\lambda & 0 & 0 \\
1 & (p-1)(q-1) & 0 & pq-q+p-2-\lambda & 0 \\
1 & 0 & 0 & 0 & 1-\lambda
\end{pmatrix}.
\]
After computing the determinant, we get \[det(N)=(\lambda-(pq-2))(-\lambda^4 +(5pq-p-q-3)\lambda^3 +X\lambda^2 +Y\lambda +Z).\]
Where $ X=-8p^2q^2+4p^2q+4pq^2+5pq+p+q-4; ~Y=4p^3q^3-4p^3q^2-4p^2q^3+4p^2q^2-2p^2q-2pq^2-2pq+4p+4q; ~Z=-2p^3q^3+2p^3q^2+2p^2q^2-2p^2q-2pq^2-4p-4q+8.$

Therefore, \[P_{\mathcal{SL}(\mathcal{G}_{D_{2pq}})}(\lambda) =
(\lambda-1)^{pq-1}(\lambda-(pq-q-1))^{p-2}(\lambda-(pq-p-1))^{q-2}(\lambda-(pq-2))^{\phi(pq)}(-\lambda^4 +(5pq-p-q-3)\lambda^3 +X\lambda^2 +Y\lambda +Z).\]
Where $ X=-8p^2q^2+4p^2q+4pq^2+5pq+p+q-4; ~Y=4p^3q^3-4p^3q^2-4p^2q^3+4p^2q^2-2p^2q-2pq^2-2pq+4p+4q; ~Z=-2p^3q^3+2p^3q^2+2p^2q^2-2p^2q-2pq^2-4p-4q+8.$

\end{proof}

\begin{note}
    \emph{One can find the above result in \cite{banerjee2020a}, which is a lengthy and complicated proof. Whereas our method is very short and easier.}
\end{note}

\section*{Conclusion}  
In this paper, we have investigated the adjacency, Laplacian, and signless Laplacian spectra of the power graph \(\mathcal{G}_{D_{2pq}}\) for the dihedral group \(D_{2pq}\), where \(p\) and \(q\) are distinct primes. Our results correct and refine the work of Romdhini et al. \cite{romdhini2024}, demonstrating that their spectral formulas hold only when \(n = p^m\) for a prime \(p\) and \(m \in \mathbb{N}\). Through explicit counterexamples and a detailed analysis of the structure of \(\mathcal{G}_{D_{2pq}}\), we established complete spectral characterizations, filling a gap in the existing literature.  

The methods developed here—particularly the block matrix decomposition and eigenvalue analysis—can be extended to other classes of finite groups, such as dicyclic or symmetric groups. Future work may explore:  
\begin{itemize}  
    \item Spectral properties of power graphs for non-abelian groups with more complex orders (e.g., \(D_{2p^2q}\)).  
    \item Applications of these spectra in network theory, such as studying synchronization dynamics or robustness.  
    \item Connections between the algebraic structure of \(G\) and the automorphism group of \(\mathcal{G}_G\).  
\end{itemize}  

\noindent  
Our findings underscore the deep relationship between group theory and spectral graph theory, paving the way for further interdisciplinary research.  

% \section*{Acknowledgment}

\section*{Declarations}
\textbf{Conflict of interest:} There are no conflicts of interest, according to the authors.
% \bibliographystyle{plain}
% \bibliography{bibliography}

\begin{thebibliography}{99}

\bibitem{banerjee2020a} S. Banerjee and A. Adhikari. On spectra and spectral radius of signless laplacian of power graphs of some finite groups. \emph{Asian-European Journal of Mathematics}, page 2150090, 2020.

\bibitem{brouwer2011} A. E. Brouwer and W. H. Haemers. \emph{Spectra of Graphs}. Springer Science and Business Media, 2011.

\bibitem{cameron2020} P. J. Cameron and S. H. Jafari. On the connectivity and independence number of power graphs of groups. \emph{Graphs and Combinatorics}, \textbf{36(3)}:895–904, 2020.

\bibitem{chakrabarty2009} I. Chakrabarty, S. Ghosh, and M. Sen. Undirected power graphs of semigroups. In \emph{Semigroup Forum}, volume 78, pages 410–426. Springer, 2009.

\bibitem{chattopadhyay2015} S. Chattopadhyay and P. Panigrahi. On laplacian spectrum of power graphs of finite cyclic and dihedral groups. \emph{Linear and Multilinear Algebra}, \textbf{63(7)}:1345–1355, 2015.

\bibitem{chattopadhyay2018} S. Chattopadhyay, P. Panigrahi, and F. Atik. Spectral radius of power graphs on certain finite groups. \emph{Indagationes Mathematicae}, \textbf{29(2)}:730–737, 2018.

\bibitem{chattopadhyay2019} S. Chattopadhyay, K. L. Patra, and B. K. Sahoo. Vertex connectivity of the power graph of a finite cyclic group. \emph{Discrete Applied Mathematics}, \textbf{266}:259–271, 2019.

\bibitem{horn2012} R. A. Horn and C. R. Johnson. \emph{Matrix Analysis}. Cambridge University Press, 2012.

\bibitem{kelarev2002} A. Kelarev and S. J. Quinn. Directed graphs and combinatorial properties of semigroups. \emph{Journal of Algebra}, \textbf{251(1)}:16–26, 2002.

\bibitem{kelarev2000} A. V. Kelarev and S. J. Quinn. A combinatorial property and power graphs of groups. \emph{Contributions to General Algebra}, \textbf{12(58)}:3–6, 2000.

\bibitem{kumar2021} A. Kumar, L. Selvaganesh, P. J. Cameron, and T. T. Chelvam. Recent developments on the power graph of finite groups – a survey. \emph{AKCE International Journal of Graphs and Combinatorics}, \textbf{18(2)}:65–94, 2021.

\bibitem{mehranian2017} Z. Mehranian, A. Gholami, and A. Ashrafi. The spectra of power graphs of certain finite groups. \emph{Linear and Multilinear Algebra}, \textbf{65(5)}:1003–1010, 2017.

\bibitem{moghaddamfar2014} A. Moghaddamfar, S. Rahbariyan, and W. Shi. Certain properties of the power graph associated with a finite group. \emph{Journal of Algebra and its Applications}, \textbf{13(07)}:1450040, 2014.

\bibitem{panda2019} R. P. Panda. Laplacian spectra of power graphs of certain finite groups. \emph{Graphs and Combinatorics}, \textbf{35(5)}:1209–1223, 2019.

\bibitem{romdhini2024} M. U. Romdhini, A. Nawawi, F. Al-Sharqi, and A. Al-Quran. Spectral properties of power graph of dihedral groups. \emph{European Journal of Pure and Applied Mathematics}, \textbf{17(2)}:591–603, 2024.

\bibitem{rose2009} H. E. Rose. \emph{A Course on Finite Groups}. Springer Science and Business Media, 2009.

\end{thebibliography}

\end{document}